\newtheorem{theorem}{Theorem}
\theoremstyle{definition}
\newtheorem{definition}[theorem]{Definition}
\providecommand{\url}[1]{\texttt{#1}}
\begin{document}

\title{Dengue in Cape Verde: vector control and vaccination\thanks{This is a preprint of a paper 
whose final and definite form will appear in \emph{Mathematical Population Studies}. 
Paper submitted 03-Oct-2011; revised several times; accepted for publication 2-April-2012.}}

\author{Helena Sofia Rodrigues$^1$
\and M. Teresa T. Monteiro$^2$
\and Delfim F. M. Torres$^3$}

\footnotetext{Address correspondence to Helena Sofia Rodrigues,
School of Business Studies, Viana do Castelo Polytechnic Institute,
4900-347 Viana do Castelo, Portugal. E-mail: sofiarodrigues@esce.ipvc.pt}


\date{$^1$School of Business Studies, Viana do Castelo Polytechnic Institute\\
4900-347 Viana do Castelo, Portugal\\
E-mail: sofiarodrigues@esce.ipvc.pt\\[0.3cm]
$^2$Center Algoritmi\\
Department of Production and Systems, University of Minho\\
4710-057 Braga, Portugal\\
E-mail: tm@dps.uminho.pt\\[0.3cm]
$^3$Center for Research and Development in Mathematics and Applications\\
Department of Mathematics, University of Aveiro\\
3810-193 Aveiro, Portugal\\
E-mail: delfim@ua.pt}

\maketitle


\begin{abstract}
In 2009, for the first time in Cape Verde, an outbreak of dengue was
reported and over twenty thousand people were infected. Only a few
prophylactic measures were taken. The effects of vector control
on disease spreading, such as insecticide (larvicide and adulticide)
and mechanical control, as well as an hypothetical vaccine,
are estimated through simulations with the Cape Verde data.

\medskip

\noindent \textbf{Keywords:} dengue; Cape Verde; vaccine; insecticide;
mechanical control; basic reproduction number.

\smallskip

\noindent \textbf{2010 Mathematics Subject Classification:} 34H05; 92D30.
\end{abstract}


\section{Introduction}

Dengue is a disease which is now endemic in Africa, America, Asia,
and the Western Pacific. In Europe there has been no registered cases so far,
yet the main vector of the disease is already present there
and has been followed in Madeira.
According to the World Health Organization \citep{WHO}, the
incidence of dengue has grown drastically in the last decades and
roughly two fifths of the world population is now at risk. So far,
the strategies to control \emph{Aedes aegypti} mosquito proved to be inefficient
and the continuous level of surveillance of the mosquito is low.
Climatic changes, unsanitary habitat, poverty, uncontrolled urbanization,
and global travel favor the propagation of dengue fever. Inadequate water supply
requires large-scale water storages, which are ideal breeding
habitats for mosquitoes. The increasing movement of people and goods
has enabled the dengue virus and its vectors to spread to new parts
of the world \citep{Semenza2009}.
There is no effective vector control and dengue
infection rates have been increasing during the last 40 years,
\textrm{e.g.}, in Thailand has increased from 9/100,000 in 1958
to 189/100,000 in 1998 \citep{Healstead1992,Thailand}.
The virus emerged for the first time
in Cape Verde at the end of September 2009. The outbreak
was the biggest ever recorded in Africa. As the population had never had
contact with the virus, herd immunity was very low. Dengue
type 3 spread throughout four of the nine islands. The worst outbreak occurred
on the island of Santiago, where most people live, up to 1000 cases per day
in November. The Ministry of Health of Cape Verde
reported over 20,000 cases of dengue fever,
which is about 5\% of the total population of the country,
between October and December 2009. From 173 cases of dengue
haemorrhagic fever reported, six people died \citep{CapeVerde,CapeVerdeSaude}.


\section{Dengue and \emph{Aedes aegypti}}
\label{sec:2}

Dengue is transmitted to humans by mosquitoes, mainly
\emph{Aedes aegypti}, and exists either as
\emph{dengue fever} (DF)
or as \emph{dengue hemorrhagic fever} (DHF).
The disease can be contracted by one of
four types of viruses.
Infection with one serotype confers lifetime immunity against that
serotype, but not to the others, and there is evidence that a prior
infection increases the risk of developing DHF
for people infected with another serotype. DF is
characterized by sudden high fever (3 to 7 days) without
respiratory symptoms, intense headache, and painful
joints and muscles. The haemorrhagic form is also characterized by
sudden fever, nausea, vomiting, and fainting due to low blood
pressure by fluid leakage. It usually lasts between two and three
days and can lead to death. There is no specific
effective treatment for dengue. Fluid replacement therapy is used
if clinical diagnosis is made early. Vaccine candidates are undergoing
clinical trials \citep{WHO}.

\emph{Aedes aegypti} is closely associated with humans and their dwellings,
thriving in crowded cities and biting primarily during day light.
Humans also provide nutrients needed for mosquitoes to reproduce
through water-holding containers, in and around human homes.
In urban areas, \emph{Aedes} mosquitoes breed on water
collections in artificial containers such as plastic cups, used
tires, broken bottles, and flower pots. With urbanization, crowded cities,
poor sanitation, and lack of hygiene, environmental conditions foster
the spread of the disease which, even in the absence of fatal forms, breeds
significant economic and social costs (absenteeism, immobilization,
debilitation, and medication) \citep{Derouich2006,Ferreira2006}.

Female mosquitoes acquire infection by taking a blood meal from an
infected human. These infected mosquitoes pass the disease to
susceptible humans. Female mosquitoes lay their eggs on inner
wet walls of containers. Larvae hatch when water inundates eggs.
In the following days, the larvae feed on microorganisms and
particulate organic matter. When the larva has acquired enough
energy and size, metamorphosis changes the larva into a pupa.
Pupae do not feed: they just change in form until the adult body,
the fling mosquito, is formed. The newly formed adult emerges
from the water after breaking the pupal skin. The entire life cycle,
from the aquatic phase (eggs, larvae, pupae) to the adult phase,
lasts from 8 to 10 days at room temperature, depending on the level
of feeding \citep{Christophers1960}.

It is very difficult to control or eliminate \emph{Aedes aegypti}
mosquitoes because they quickly adapt to changes in the environment
and they quickly pullulate again after droughts or prophylactic measures.
The transmission thresholds and the extent of dengue
transmission are determined by the level
of herd immunity in the human population to circulating virus
serotype(s), virulence characteristics of the viral strain,
survival, feeding behavior, abundance of \emph{Aedes aegypti},
climate and human density, distribution, and movement \citep{Scott2004}.
There are two primary preventions: larval control and adult mosquito
control, depending on the intended target \citep{Natal2002}.
Larvicide treatment is an effective control of the vector
larvae, together with mechanical control, which is related to
educational campaigns to remove still water from domestic recipients
and eliminating possible breeding sites. The larvicide
should be long-lasting and preferably have World
Health Organization clearance for use in drinking water
\citep{Derouich2003}. The application of adulticides can
drop the mosquito vector population. However, the efficacy
is often constrained by the difficulty in achieving
sufficiently high coverage of resting surfaces \citep{Devine2009}.

While \citet{Feng} investigate the competitive exclusion principle
in a two-strain dengue model, \citet{Choewll} estimate
the basic reproduction number for dengue using spatial
epidemic data. \citet{Tewa}, established the global asymptotic stability of the equilibria
of a single-strain dengue model. \citet{Thome2009} introduce a sterile insect technique.


\section{Model}
\label{sec:3}

We adapt the model presented in \citet{Dumont2008} and \citet{Dumont2010}
to dengue, with the considerations of \citet{Sofia2009,MyID:168,Sofia2010}.
We consider three controls simultaneously: larvicide, adulticide,
and mechanical control, with mutually-exclusive compartments,
to study the outbreak of 2009 in Cape Verde and
improve upon \citet{Sofia2009}. We denote
$S_h$ the total number of susceptible,
$I_h$ the total number of infected and infectious, and
$R_h$ the total number of resistant individuals.
The total human population is a constant $N_h=S_h(t)+I_h(t)+R_h(t)$.
The population is homogeneous, which means that every individual
of a compartment is homogeneously mixed with the other individuals.
Immigration and emigration are ignored.

Female mosquitoes are in total number $A_m$ in the aquatic phase
(including egg, larva, and pupa stages),
$S_m$ are susceptible, and $I_m$ are infected.
Mosquitoes are considered to live too briefly to develop resistance.
Each mosquito has an equal probability to bite any host.
Humans and mosquitoes are assumed to be born susceptible.

We consider three controls: the proportion
$c_{A}$ of larvicide, the proportion $c_{m}$ of adulticide,
and the proportion $\alpha$ of mechanical control.
Larval control targets the immature mosquitoes living in water
before they bite. The natural soil bacterium
\emph{Bacillus thuringiensis israelensis} (Bti) is sprayed
from the ground or by air to larval habitats.
The control of adult mosquitoes is necessary when mosquito populations
cannot be treated in their larval stage. Depending upon the size of the
area, either trucks for ground adulticide treatments or aircraft
for aerial adulticide treatments are used.
The purpose of mechanical control is to reduce
larval habitat areas. The parameters used in the model are:

$N_h$ total population;

$B$ average daily biting (per day);

$\beta_{mh}$ transmission probability from $I_m$ (per bite);

$\beta_{hm}$ transmission probability from $I_h$ (per bite);

$1/\mu_{h}$ average lifespan of humans (in days);

$1/\eta_{h}$ mean viremic period (in days);

$1/\mu_{m}$ average lifespan of adult mosquitoes (in days);

$\varphi$ number of eggs at each deposit per capita (per day);

$1/\mu_{A}$ natural mortality of larvae (per day);

$\eta_{A}$ maturation rate from larvae to adult (per day);

$m$ female mosquitoes per human;

$k$ total number of larvae per human.

The dengue epidemic is modelled by the nonlinear
time-varying state equations
\begin{equation}
\label{ode1}
\begin{cases}
S_h'(t) = \mu_h N_h - \left(B\beta_{mh}\frac{I_m(t)}{N_h}+\mu_h\right)S_h(t)\\
I_h'(t) = B\beta_{mh}\frac{I_m(t)}{N_h}S_h(t) -(\eta_h+\mu_h) I_h(t)\\
R_h'(t) = \eta_h I_h(t) - \mu_h R_h(t)
\end{cases}
\end{equation}
and
\begin{equation}
\label{ode2}
\begin{cases}
A_m'(t) = \varphi \left(1-\frac{A_m(t)}{\alpha k N_h}\right)(S_m(t) + I_m(t))
-\left(\eta_A + \mu_A + c_A\right) A_m(t)\\
S_m'(t) = \eta_A A_m(t)
-\left(B \beta_{hm}\frac{I_h(t)}{N_h}+\mu_m + c_m\right) S_m(t)\\
I_m'(t) = B \beta_{hm}\frac{I_h(t)}{N_h}S_m(t)
-\left(\mu_m + c_m\right) I_m(t)
\end{cases}
\end{equation}
with the initial conditions
\begin{equation*}
\begin{tabular}{llll}
$S_h(0)=S_{h0},$ &  $I_h(0)=I_{h0},$ &
$R_h(0)=R_{h0},$ \\
$A_m(0)=A_{m0},$ & $S_{m}(0)=S_{m0},$ & $I_m(0)=I_{m0}.$
\end{tabular}
\end{equation*}

Figure~\ref{model} shows a scheme of the model.
\begin{figure}
\begin{center}
\includegraphics[scale=0.5]{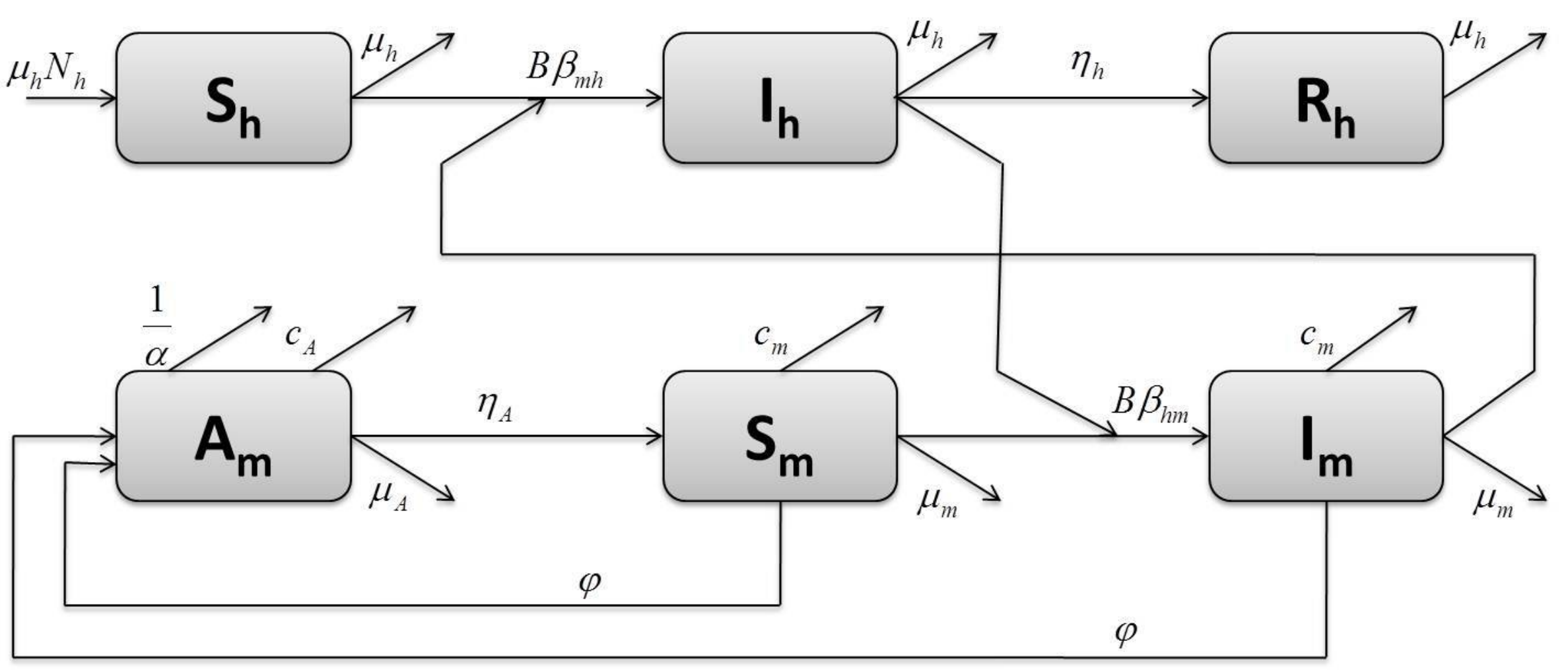}
\end{center}
\caption{Epidemiological model SIR (Susceptible, Infected, Recovered)
$+$ ASI (Aquatic phase, Susceptible, Infected)
with $S_h$, $I_h$, $R_h$, $A_m$, $S_m$, and $I_m$
as state variables, $c_A$, $c_m$, and $\alpha$ as controls,
and $N_h$, $B$, $\beta_{mh}$, $\beta_{hm}$, $\mu_h$, $\eta_h$,
$\mu_m$, $\varphi$, $\mu_A$, $\eta_A$, $m$, and $k$ as given parameters. \label{model}}
\end{figure}


\section{Equilibrium points and the basic reproduction number}

We study the solutions of System \eqref{ode1}--\eqref{ode2} in the closed set
\begin{equation*}
\Omega=\left\{(S_h,I_h,R_h,A_m,S_m,I_m)\in \mathbb{R}_{+}^{6}:
S_h+I_h+R_h\leq N_h,\,  A_m\leq k N_h, \, S_m+I_m \leq mN_h\right\}.
\end{equation*}
The $\Omega$ set is positively invariant
with respect to Eq.~\eqref{ode1}--\eqref{ode2} \citep{Sofia2011}.
System \eqref{ode1}--\eqref{ode2} has at most three biologically
meaningful equilibrium points (\textrm{cf.} Theorem~\ref{thm:thm1}).

\begin{definition}
A sextuple $E = \left(S_h,I_h,R_h,A_m,S_m,I_m\right) \in \mathbb{R}^6$ is an
\emph{equilibrium point} for System \eqref{ode1}--\eqref{ode2}
if it satisfies
\begin{equation}
\label{equilibrio}
\begin{cases}
\mu_h N_h - \left(B\beta_{mh}\frac{I_m}{N_h}+\mu_h\right)S_h=0\\
B\beta_{mh}\frac{I_m}{N_h}S_h -(\eta_h+\mu_h) I_h=0\\
\eta_h I_h - \mu_h R_h=0\\
\varphi \left(1-\frac{A_m}{\alpha k N_h}\right)(S_m+I_m)-(\eta_A+\mu_A+c_A) A_m=0\\
\eta_A A_m - \left(B \beta_{hm}\frac{I_h}{N_h}+\mu_m + c_m\right) S_m=0\\
B \beta_{hm}\frac{I_h}{N_h}S_m -(\mu_m + c_m) I_m=0.
\end{cases}
\end{equation}
An equilibrium point $E$ is \emph{biologically meaningful}
if and only if $E \in \Omega$. The biologically meaningful equilibrium points
are said to be disease-free or endemic depending on $I_h$ and $I_m$:
if there is no disease for both populations of humans and mosquitoes ($I_h=I_m=0$),
then the equilibrium point is a \emph{disease-free equilibrium} (DFE);
otherwise, if $I_h > 0$ or $I_m > 0$, then the equilibrium point is called \emph{endemic}.
\end{definition}

\begin{theorem}
\label{thm:thm1}
System \eqref{ode1}--\eqref{ode2} admits at most three biologically
meaningful equilibrium points, at most two DFE points,
and at most one endemic equilibrium point. Let
\begin{equation}
\label{eq:M}
\mathcal{M} =-\left(\eta_A \mu_m+\eta_A c_m+\mu_A \mu_m+\mu_A
c_m+c_A \mu_m+c_A c_m-\varphi \eta_A\right),
\end{equation}
\begin{equation}
\label{eq:xi:chi}
\xi = \varphi(\mu_m+c_m)^{2}(\eta_h+\mu_h), \quad
\chi = \alpha k B^2 \beta_{hm}\beta_{mh}\mathcal{M},
\end{equation}
\begin{equation}
\label{eq:E1:E2}
E_{1}^{*}=\left(N_h,0,0,0,0,0\right), \quad
E_{2}^{*}=\left(N_h,0,0,\frac{\alpha
k N_h \mathcal{M}}{\eta_A\varphi}, \frac{\alpha k N_h
\mathcal{M}}{(\mu_m+c_m) \varphi},0\right),
\end{equation}
and $E_{3}^{*}=\left(S_h^*,I_h^*,R_h^*,A_m^*,S_m^*,I_m^*\right)$ with
\begin{equation}
\label{eqE3:t2}
\left\{
\begin{split}
S_{h}^{*} &= \frac{ -\varphi
N_h(\mu_m\eta_h+\mu_h B
\beta_{hm}+c_m\mu_h+c_m\eta_h+\mu_m\mu_h)(\mu_m+c_m)}{B\beta_{hm}(-\alpha
k B \beta_{mh}\mathcal{M}-\varphi\mu_h(\mu_m+c_m))},\\
I_{h}^{*} &=\frac{\mu_h
N_h\left(\xi-\chi\right)}{(\eta_h+\mu_h)B\beta_{hm}(-\alpha
k B \beta_{mh}\mathcal{M}-\varphi\mu_h(\mu_m+ c_m))},\\
R_{h}^{*} &= \frac{\eta_h
N_h\left(\xi-\chi\right)}{(\eta_h+\mu_h)B\beta_{hm}\left(-\alpha
k B \beta_{mh}\mathcal{M}-\varphi \mu_h(\mu_m+c_m)\right)},\\
A_{m}^{*} &= \frac{N_h k \alpha \mathcal{M}}{\varphi \eta_A},\\
S_{m}^{*} &=\frac{N_h\mu_h(c_m+\mu_h)(\mu_h+\eta_h)}{B \beta_{mh}(\mu_m \eta_h+\mu_h B
\beta_{hm}+c-m\mu_h+c_m\eta_h+\mu_m\mu_h)}\\
&\qquad -\frac{N_h\alpha k \left(c_m(\mu_h+\eta_h)(\mu_A+\eta_A+c_A)
+\mu_m(\mu_h(\eta_A+\mu_A)+\eta_{h}(c_A+\eta_A))\right)}{\varphi(\mu_m \eta_h+\mu_h B
\beta_{hm}+c-m\mu_h+c_m\eta_h+\mu_m\mu_h)}\\
&\qquad -\frac{N_h\alpha k \left(-\eta
\varphi (\mu_h+\eta_h)+\mu_m(\eta_h\mu_A+\mu_h
c_A)\right)}{\varphi(\mu_m \eta_h+\mu_h B
\beta_{hm}+c-m\mu_h+c_m\eta_h+\mu_m\mu_h)},\\
I_{m}^{*} &=\frac{-\mu_h N_h\left(\xi-\chi\right)}{B\beta_{mh}\left(
\varphi B\beta_{hm}\mu_h(\mu_m+c_m)+\xi\right)}.
\end{split}
\right.
\end{equation}
If $\mathcal{M} \le 0$, then there is only
one biologically meaningful equilibrium point $E_{1}^{*}$,
which is a DFE point. If $\mathcal{M} > 0$ with $\xi \ge \chi$,
then there are two biologically meaningful equilibrium points
$E_{1}^{*}$ and $E_{2}^{*}$, both DFE points.
If $\mathcal{M} > 0$ with $\xi < \chi$,
then there are three biologically meaningful equilibrium points
$E_{1}^{*}$, $E_{2}^{*}$, and $E_{3}^{*}$,
where $E_{1}^{*}$ and $E_{2}^{*}$ are DFEs
and $E_{3}^{*}$ is endemic.
\end{theorem}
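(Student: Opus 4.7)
The plan is to solve the algebraic system \eqref{equilibrio} by elimination and then to check, for each candidate, when it lies in~$\Omega$. A preliminary observation is that, together with $S_h>0$ (which is forced by the first equation of \eqref{equilibrio}), the second and the sixth equations imply that $I_h=0$ holds if and only if $I_m=0$; so the problem splits cleanly into a disease-free case and an endemic case.

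In the disease-free case, setting $I_h=I_m=0$ forces $S_h=N_h$ from the first equation and $R_h=0$ from the third. The fifth equation reduces to $\eta_A A_m=(\mu_m+c_m)S_m$, and substituting into the fourth yields a factored identity of the form
\begin{equation*}
A_m\bigl[\,\varphi \eta_A(\alpha k N_h - A_m)-\alpha k N_h(\mu_m+c_m)(\eta_A+\mu_A+c_A)\,\bigr]=0,
\end{equation*}
whose two roots $A_m=0$ and $A_m=\alpha k N_h \mathcal{M}/(\varphi\eta_A)$ produce $E_1^{*}$ and $E_2^{*}$ respectively. The point $E_1^{*}$ always lies in $\Omega$; the point $E_2^{*}$ has all components in $\Omega$ iff $\mathcal{M}\ge 0$, and it collapses onto $E_1^{*}$ precisely when $\mathcal{M}=0$. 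This gives the DFE classification.

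In the endemic case, assume $I_h,I_m>0$. The key observation that streamlines the algebra is that adding the fifth and the sixth equations yields $\eta_A A_m=(\mu_m+c_m)(S_m+I_m)$ independently of $I_h$; substituting this into the fourth equation and cancelling the nonzero factor $A_m$ forces $A_m^{*}=\alpha k N_h \mathcal{M}/(\varphi\eta_A)$ as in $E_2^{*}$, which already requires $\mathcal{M}>0$. Combining the first and second equations gives $S_h^{*}=N_h-(\eta_h+\mu_h)I_h^{*}/\mu_h$; the second equation then expresses $I_m^{*}$ in terms of $I_h^{*}$ via $S_h^{*}$, and the sixth expresses $S_m^{*}$ in terms of $I_m^{*}$ and $I_h^{*}$. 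The one remaining constraint, the fifth equation, reduces after substitution to a linear equation in $I_h^{*}$ whose unique root can be put in the form displayed in the theorem, with numerator proportional to $\xi-\chi$. Under $\mathcal{M}>0$ the denominator is strictly negative, so $I_h^{*}>0$ iff $\xi<\chi$; the cases $\mathcal{M}\le 0$ or $\xi\ge \chi$ yield $I_h^{*}\le 0$, ruling out an endemic equilibrium in $\Omega$.

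The main obstacle is the sign and monotonicity bookkeeping needed to guarantee that the whole sextuple $E_3^{*}$ lies in $\Omega$, not merely that $I_h^{*}>0$. One must verify that $S_h^{*}\in(0,N_h]$, that $A_m^{*}\le k N_h$, and that $S_m^{*}+I_m^{*}\le m N_h$, and one must check that no solution branch was lost when dividing by $A_m$ or by $I_h^{*}$ during the reduction. These checks reduce to sign inspections on rational expressions sharing the common denominator $-\alpha k B\beta_{mh}\mathcal{M}-\varphi\mu_h(\mu_m+c_m)$, whose sign is fixed by $\mathcal{M}>0$. The identity $A_m^{*}=\alpha k N_h \mathcal{M}/(\varphi\eta_A)$ established already at the DFE step keeps these verifications short and, together with the linearity of the reduced equation in $I_h^{*}$, also delivers the uniqueness of the endemic equilibrium.
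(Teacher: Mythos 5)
Your proposal is correct, and it takes a genuinely different route from the paper. The paper's proof is essentially computational: it feeds System \eqref{equilibrio} to Maple, obtains four candidate solutions $E_1^*,\dots,E_4^*$, discards $E_4^*$ as always having negative coordinates, and then classifies $E_2^*$ and $E_3^*$ by sign conditions on $\mathcal{M}$ and $\xi-\chi$. You instead do the elimination by hand: the observation that $S_h>0$ is forced, so that $I_h=0$ if and only if $I_m=0$, cleanly splits the analysis into a disease-free branch (where the fourth and fifth equations factor as $A_m\bigl[\varphi\eta_A(\alpha k N_h-A_m)-\alpha k N_h(\mu_m+c_m)(\eta_A+\mu_A+c_A)\bigr]=0$, yielding exactly $E_1^*$ and $E_2^*$) and an endemic branch (where summing the fifth and sixth equations pins down $A_m^*$ and the remaining constraint is linear in $I_h^*$ with numerator proportional to $\chi-\xi$); I checked that this linear equation reproduces the stated $I_h^*$. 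What your route buys: the counts ``at most two DFE'' and ``at most one endemic equilibrium'' become consequences of the factorization and of linearity, respectively, rather than of trusting the completeness of a CAS solution set, and no spurious fourth root ever needs to be mentioned. What the paper's route buys is the explicit closed forms for all six components of $E_3^*$ with little labor. Both arguments leave the upper-bound part of the $\Omega$-membership check (namely $A_m\le kN_h$ and $S_m+I_m\le mN_h$, which do not follow from $\mathcal{M}>0$ alone without further parameter restrictions) essentially unverified; you at least name these checks explicitly, whereas the paper asserts membership in $\Omega$ outright, so your sketch is no less rigorous than the published proof on this point.
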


\begin{proof}
System \eqref{equilibrio} has four solutions
obtained with the software Maple:
$E_{1}^{*}$, $E_{2}^{*}$, $E_{3}^{*}$, and $E_{4}^{*}$.
The equilibrium point $E_{1}^{*}$ is always a DFE
because it always belong to $\Omega$ with $I_h=I_m=0$.
In contrast, $E_{4}^{*}$ is never biologically realistic
because it always has some negative coordinates.
The other two equilibrium points, $E_{2}^{*}$ and $E_{3}^{*}$,
are biologically realistic only for certain values of the parameters.
The equilibrium $E_{2}^{*}$ is biologically realistic if and only if
$\mathcal{M} \ge 0$, in which case it is a DFE. For the
condition $\mathcal{M} \le 0$, the third
equilibrium $E_{3}^{*}$ is not biologically realistic.
If $\mathcal{M}>0$, then three situations can occur
with respect to $E_{3}^{*}$: if
$\xi = \chi$, then $E_{3}^{*}$ degenerates into $E_2^*$,
which means that $E_{3}^{*}$ is the DFE $E_{2}^{*}$;
if $\xi > \chi$, then $E_{3}^{*}$ is not biologically realistic; otherwise,
one has $E_{3}^{*} \in \Omega$ with $I_h \ne 0$ and $I_m \ne 0$, which means that
$E_{3}^{*}$ is an endemic equilibrium point.
\end{proof}

By algebraic manipulation, $\mathcal{M}>0$ is equivalent to the condition
$\displaystyle \frac{(\eta_A+\mu_A+c_A)(\mu_m+c_m)}{\varphi\eta_A}>1$,
which is related to the number for offspring mosquitos. Thus,
if $\mathcal{M} \leq 0$, then the mosquito population will collapse and the only
equilibrium for the whole system is the trivial DFE $E_{1}^{*}$.
If $\mathcal{M} > 0$, then the mosquito population is sustainable. From a
biological standpoint, the equilibrium $E_{2}^{*}$ is more plausible,
because the mosquito is in its habitat, but without the disease.

\begin{definition}\citep{Hethcote2008}
The \emph{basic reproduction number},
denoted by $\mathcal{R}_0$, is defined as the
average number of secondary infections occurring when one
infective is introduced into a completely susceptible population.
\end{definition}

The basic reproduction number provides an invasion criterion for the
initial spread of the virus in a susceptible population. For this case,

\begin{theorem}
\label{thm:r0}
The basic reproduction number $\mathcal{R}_0$ associated to the
differential System \eqref{ode1}--\eqref{ode2} is
\begin{equation}
\label{eq:R0}
\mathcal{R}_0 = \left(\frac{\alpha k B^2 \beta_{hm} \beta_{mh}
\mathcal{M}}{\varphi (\eta_h + \mu_h) (c_m + \mu_m)^2}\right)^{\frac{1}{2}}
= \left(\frac{\chi}{\xi}\right)^{\frac{1}{2}}.
\end{equation}
\end{theorem}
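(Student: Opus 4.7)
The plan is to apply the next-generation matrix method of van den Driessche and Watmough, which is the standard route for computing $\mathcal{R}_0$ in compartmental vector-borne models. Since $\mathcal{R}_0$ counts secondary infections produced by a single infective introduced into an otherwise disease-free population, and since transmission is impossible without a resident vector population, the linearization must be performed at the non-trivial DFE $E_{2}^{*}$ of Theorem~\ref{thm:thm1}; linearizing at $E_{1}^{*}$ would force $S_m^{*}=0$ and yield the trivially zero answer. This also explains why the final formula depends on $\mathcal{M}$, which is essentially the mosquito-viability index, and implicitly requires the regime $\mathcal{M}>0$ so that $E_{2}^{*}\in\Omega$.

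Concretely, I would restrict attention to the two infected compartments $(I_h,I_m)$ and split their right-hand sides in System \eqref{ode1}--\eqref{ode2} as $\mathcal{F}-\mathcal{V}$, where
\[
\mathcal{F} = \begin{pmatrix} B\beta_{mh}\dfrac{I_m}{N_h}\,S_h \\[4pt] B\beta_{hm}\dfrac{I_h}{N_h}\,S_m \end{pmatrix}, \qquad
\mathcal{V} = \begin{pmatrix} (\eta_h+\mu_h)\,I_h \\ (\mu_m+c_m)\,I_m \end{pmatrix}
\]
collect the new-infection and transfer terms, respectively. Differentiating at $E_{2}^{*}$ and substituting $S_h^{*}=N_h$ together with $S_m^{*}=\alpha k N_h \mathcal{M}/\bigl((\mu_m+c_m)\varphi\bigr)$ from \eqref{eq:E1:E2}, the matrix $V$ is diagonal with entries $\eta_h+\mu_h$ and $\mu_m+c_m$, while $F$ is anti-diagonal with off-diagonal entries $B\beta_{mh}$ and $B\beta_{hm}\,\alpha k\mathcal{M}/\bigl((\mu_m+c_m)\varphi\bigr)$. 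Consequently, $FV^{-1}$ is anti-diagonal and its spectral radius is just the geometric mean of its off-diagonal entries.

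Multiplying out and simplifying gives
\[
\rho(FV^{-1}) = \sqrt{\frac{\alpha k B^{2}\beta_{hm}\beta_{mh}\mathcal{M}}{\varphi\,(\eta_h+\mu_h)\,(\mu_m+c_m)^{2}}},
\]
which, using the definitions \eqref{eq:xi:chi}, is precisely $\sqrt{\chi/\xi}$, as required. The computation itself is essentially routine; the only conceptual obstacle is the correct choice of DFE and the corresponding verification that the hypotheses of the van den Driessche--Watmough theorem (notably the M-matrix structure of $V$ and the non-negativity of $F$) hold at $E_{2}^{*}$. Both are immediate from the diagonal form of $V$ and the biological non-negativity of the parameters in $F$, so no additional obstruction arises.
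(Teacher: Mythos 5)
Your proposal is correct and follows essentially the same route as the paper: the van den Driessche--Watmough next-generation method with the identical splitting of the $(I_h,I_m)$ equations into $\mathcal{F}$ and $\mathcal{V}$, linearization at a DFE, and evaluation of the spectral radius of $FV^{-1}$ at $E_2^*$. Your explicit justification for choosing $E_2^*$ over $E_1^*$ is a welcome addition, but the argument is otherwise the same.
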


\begin{proof}
In agreement with \citet{Driessche2002}, we consider the
epidemiological compartments with new infections,
$I_h$ and $I_m$. The two differential equations related
with these two compartments are rewritten as
$x'(t)=\mathcal{F}(x(t))-\mathcal{V}(x(t))$,
where $x=\left(I_h, I_m\right)$, $\mathcal{F}$ is
the rate of production of new infections,
and $\mathcal{V}$ the transition rates between states:
\begin{equation*}
\mathcal{F}(x)
=\left(
\begin{array}{c}
B \beta_{mh} \frac{I_{m}}{N_h}S_{h}  \\
B \beta_{hm} \frac{I_{h}}{N_h}S_{m}\\
\end{array}
\right), \quad
\mathcal{V}(x)=\left(
\begin{array}{c}
(\eta_h+\mu_h)I_{h}  \\
(c_m+\mu_m)I_{m} \\
\end{array}
\right).
\end{equation*}
The Jacobian derivatives are
\begin{equation*}
J_{\mathcal{F}(x)}
=\left(
\begin{array}{cc}
0 &  B \beta_{mh} \frac{S_{h}}{N_h}  \\
B \beta_{hm} \frac{S_{m}}{N_h} & 0\\
\end{array}
\right), \quad
J_{\mathcal{V}(x)}=\left(
\begin{array}{cc}
\eta_h+\mu_h & 0  \\
0 & c_m+\mu_m\\
\end{array}
\right).
\end{equation*}
The quantity $J_{\mathcal{F}(x)} J_{\mathcal{V}(x)}^{-1}$
gives the total number of new infections over the
course of an outbreak. The largest eigenvalue gives the asymptotic
growth of the infected population, giving $\mathcal{R}_0$ as the
spectral radius of the matrix $J_{\mathcal{F}(x)} J_{\mathcal{V}(x)}^{-1}$
in a DFE point. Maple was used to obtain
\begin{equation}
\label{eq:r0:m}
\mathcal{R}_0 = \left(\frac{B^2 \beta_{hm} \beta_{mh} S_{h_{\mathrm{DFE}}}
S_{m_{\mathrm{DFE}}}}{(\eta_h + \mu_h) (c_m + \mu_m) N_h^2}\right)^{\frac{1}{2}}.
\end{equation}
The basic reproduction number $\mathcal{R}_0$ in Eq.~\eqref{eq:R0} is obtained
from replacing $S_{h_{\mathrm{DFE}}}$ and $S_{m_{\mathrm{DFE}}}$ in Eq.~\eqref{eq:r0:m}
by those of the DFE $E_2^*$.
\end{proof}

The model plays on the populations of host and vector,
and the expected basic reproduction number should reflect the
infection transmitted from host to vector and vice-versa.
Accordingly, $\mathcal{R}_0$ can be seen as
$\mathcal{R}_0=(\mathcal{R}_{hm}\times\mathcal{R}_{mh})^{\frac{1}{2}}$. The
infection host-vector is represented by
$\mathcal{R}_{hm}=\frac{B \beta_{hm}S_{m_{\mathrm{DFE}}}}{N_h(\eta_h +
\mu_h)}$, where the term $\frac{B \beta_{hm}S_{m_{\mathrm{DFE}}}}{N_h}$
represents the transmission probability of the disease from humans
to mosquitoes in a susceptible population of
vectors, and the term $\frac{1}{\eta_h + \mu_h}$ the
human viremic period. Analogously, the infection vector-host
is represented by $\mathcal{R}_{mh}
= \frac{B \beta_{mh}S_{h_{\mathrm{DFE}}}}{N_h(c_m + \mu_m)}$,
where $\frac{B \beta_{mh}S_{h_{\mathrm{DFE}}}}{N_h}$
represents the transmission of the disease from mosquito
to the susceptible human population, and $\frac{1}{c_m+\mu_m}$
the lifespan of an adult mosquito.

If $\mathcal{R}_0<1$, on average, an infected individual
produces less than one new infected individual over the course of
its infectious period, and the disease declines. Conversely, if
$\mathcal{R}_0>1$, then each individual infects more than one
person, and the disease invades the population.

\begin{theorem}
\label{thm:thm3}
If $\mathcal{M}>0$ and $\mathcal{R}_{0}>1$,
then System \eqref{ode1}--\eqref{ode2}
admits the endemic equilibrium
$E_{3}^{*}=\left(S_h^*,I_h^*,R_h^*,A_m^*,S_m^*,I_m^*\right)$
given by Eq.~\eqref{eqE3:t2}.
\end{theorem}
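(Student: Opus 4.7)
The plan is to obtain Theorem~\ref{thm:thm3} as an immediate corollary of Theorem~\ref{thm:thm1} and Theorem~\ref{thm:r0}, rather than re-solving the algebraic system \eqref{equilibrio}. The first step is to record sign information about the quantities $\xi$ and $\chi$ in Eq.~\eqref{eq:xi:chi}. All the parameters $\varphi$, $\mu_m+c_m$, $\eta_h+\mu_h$, $\alpha$, $k$, $B$, $\beta_{hm}$, $\beta_{mh}$ are strictly positive, so $\xi>0$ unconditionally, and under the standing hypothesis $\mathcal{M}>0$ we also have $\chi>0$. Consequently the ratio $\chi/\xi$ is a well-defined positive real number and the square root in Eq.~\eqref{eq:R0} is real.

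The second step is to translate the hypothesis $\mathcal{R}_0>1$ into the algebraic inequality that appears in Theorem~\ref{thm:thm1}. Using Theorem~\ref{thm:r0} and $\xi>0$, I would square Eq.~\eqref{eq:R0} and write
\begin{equation*}
\mathcal{R}_0 > 1 \; \Longleftrightarrow \; \frac{\chi}{\xi} > 1 \; \Longleftrightarrow \; \xi < \chi.
\end{equation*}
Combined with $\mathcal{M}>0$, this places us exactly in the third case of Theorem~\ref{thm:thm1}.

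The final step is to invoke Theorem~\ref{thm:thm1} directly: under $\mathcal{M}>0$ and $\xi<\chi$, the system \eqref{ode1}--\eqref{ode2} admits three biologically meaningful equilibria $E_1^*$, $E_2^*$, $E_3^*$, the last of which is endemic and whose coordinates are precisely those of Eq.~\eqref{eqE3:t2}. No further computation is needed, since the formulas for $E_3^*$ have already been produced (via Maple) in the proof of Theorem~\ref{thm:thm1}.

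The main obstacle here is not analytical but purely notational: one must check carefully that the inequality $\xi<\chi$ used to select the endemic branch in Theorem~\ref{thm:thm1} coincides with $\mathcal{R}_0>1$ under the expression obtained in Theorem~\ref{thm:r0}. Once the positivity of $\xi$ is recorded, the equivalence is automatic, so Theorem~\ref{thm:thm3} is essentially a biologically interpretable restatement of a case already proved, replacing the algebraic threshold $\xi=\chi$ by the epidemiological threshold $\mathcal{R}_0=1$.
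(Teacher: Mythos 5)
Your proposal is correct and follows essentially the same route as the paper's own proof: both translate the hypothesis $\mathcal{R}_{0}>1$ into the inequality $\chi>\xi$ via Theorem~\ref{thm:r0} and then invoke the third case of Theorem~\ref{thm:thm1} to obtain the endemic equilibrium $E_{3}^{*}$ of Eq.~\eqref{eqE3:t2}. Your additional remark on the positivity of $\xi$ and $\chi$ (making the squaring step and the square root legitimate) is a small point the paper leaves implicit, but it does not change the argument.
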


\begin{proof}
The only solution of Eq.~\eqref{equilibrio}
with $I_h > 0$ or $I_m > 0$, the only endemic equilibrium, is $E_3^*$.
That occurs, in agreement with Theorem~\ref{thm:thm1},
in the case $\mathcal{M}>0$ and $\chi > \xi$.
The condition $\chi > \xi$ is equivalent,
by Theorem~\ref{thm:r0}, to $\mathcal{R}_{0}>1$.
\end{proof}

Using the methods developed in \citet{Li1996} and \citet{Driessche2002},
if $\mathcal{R}_0 \leq 1$, then the DFE is globally asymptotically stable in
$\Omega$, and the vector-borne disease always dies out;
if $\mathcal{R}_0 > 1$, then the unique endemic equilibrium
is globally asymptotically stable in $\Omega$,
so that the disease, if initially present, persists
at the unique endemic equilibrium level.


\section{Numerical implementation}
\label{sec:4}

In the epidemic of Cape Verde, infections were
rising at a rate of one thousand people a day.
The entire population was asked to participate in the campaign
of cleaning and eradication of the mosquito, with the help
of the police and the army. Data for humans are available at
\citet{INEcv}, but knowledge of mosquitoes in Africa is poor.
For \emph{Aedes aegypti}, \citet{Esteva2005} and \citet{Coelho2008}
have collected observations from Brazil.
The simulations were carried out with
$N_h=480,000$, $B=0.8$, $\beta_{mh}=0.375$,
$\beta_{hm}=0.375$, $\mu_{h}=1/(71\times365)$, $\eta_{h}=1/3$,
$\mu_{m}=1/10$, $\mu_{m}=1/10$, $\varphi=6$, $\mu_A=1/4$,
$\eta_A=0.08$, $m=3$, $k=3$. The initial conditions for the
problem are: $S_{h0}=N_h-10$, $I_{h0}=10$, $R_{h0}=0$,
$A_{m0}=k N_h$, $S_{m0}=m N_h$, $I_{m0}=0$.
With these values, Eq.~\eqref{eq:M} gives $\mathcal{M}>0$.
The time interval is one year and $t_f=365$ days.
We performed all simulations and graphics with Matlab.
To solve System \eqref{ode1}--\eqref{ode2}, we used the {\it ode45} routine.
This function implements a Runge--Kutta method with a variable time step for
efficient computation.

Figures~\ref{humanno} and \ref{mosquitono} show the human and mosquito populations
in the absence of any control. The human infection reached a peak between the 30th and the 50th day.
The infection of the mosquitoes is delayed.
The total number of infected humans obtained from System \eqref{ode1}--\eqref{ode2}
is higher than observations in Cape Verde. The difference can be due to our inability
to quantify individual prophylactic efforts.
\begin{figure}
\begin{center}
\subfloat[\footnotesize{Human population
in the compartments \emph{Susceptible} ($S_h$),
\emph{Infected} ($I_h$), and \emph{Recovered} ($R_h$).}]{\label{humanno}
\includegraphics[scale=0.45]{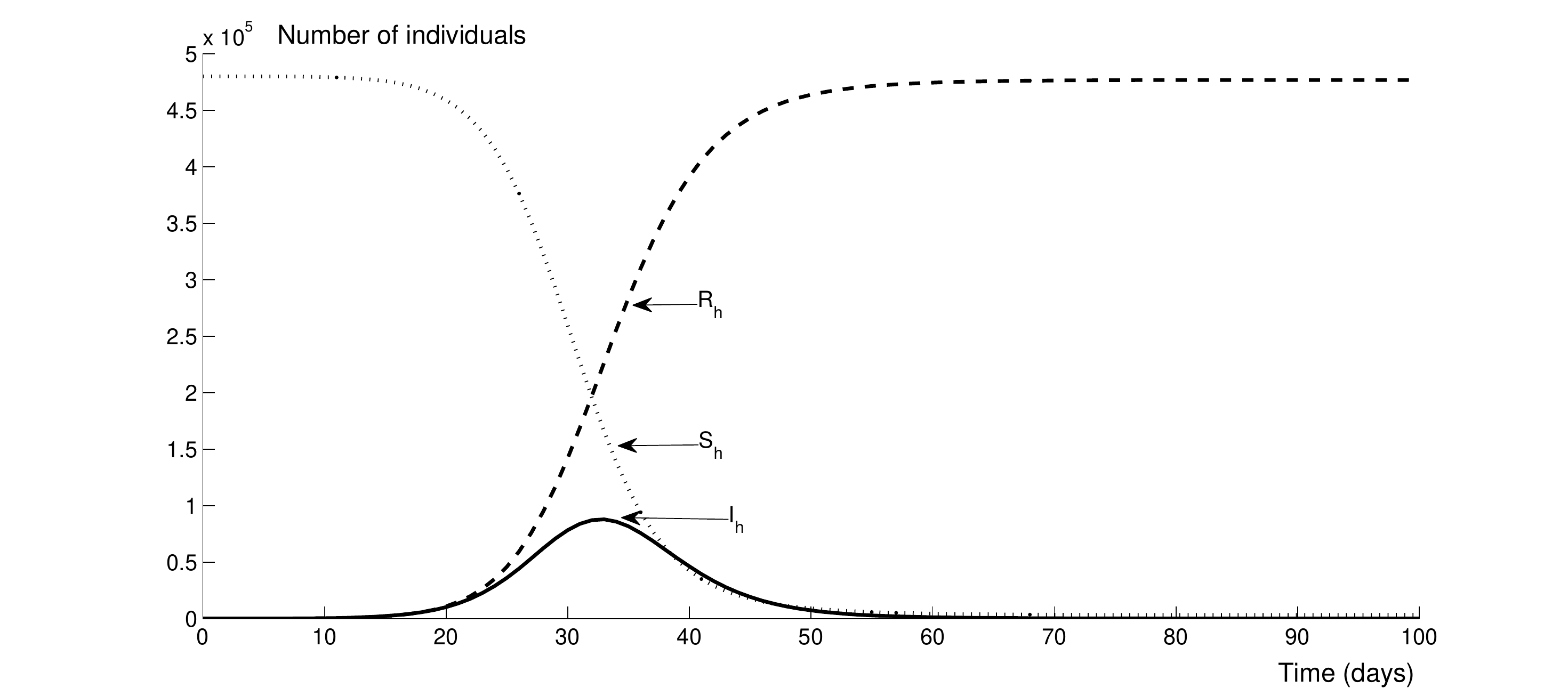}}
\newline
\subfloat[\footnotesize{Mosquito population
in the compartments \emph{Aquatic phase} ($A_m$),
Susceptible ($S_m$), and \emph{Infected} ($I_m$).}]{\label{mosquitono}
\includegraphics[scale=0.45]{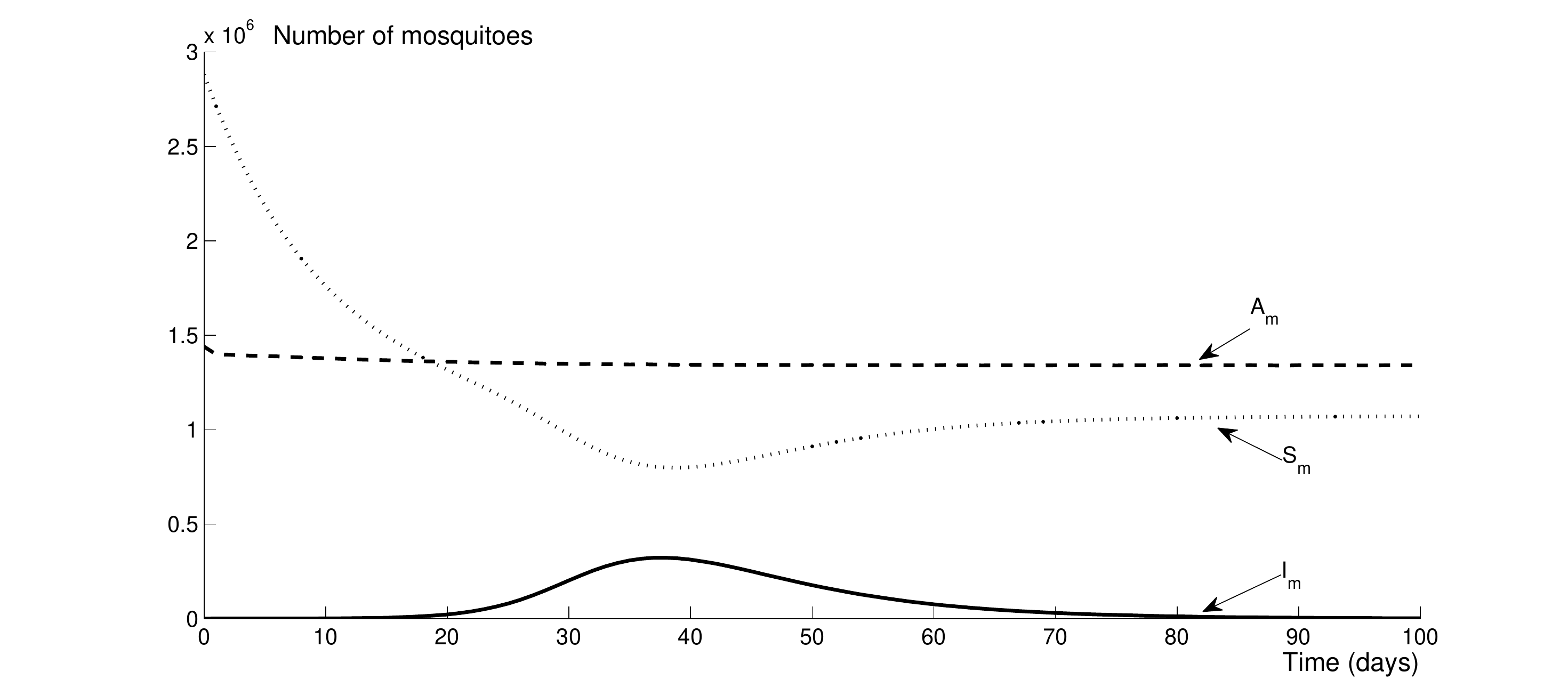}}
\caption{Variation of human and mosquito populations when no control is applied,
\textrm{i.e.}, for System \eqref{ode1}--\eqref{ode2}
with $c_{A} = 0$, $c_{m} = 0$, and $\alpha = 1$.}
\end{center}
\end{figure}

\begin{figure}
\begin{center}
\subfloat[\footnotesize{Variation of infected individuals ($I_h$) with adulticide.}]{
\label{14_human_adulticide_simulation}
\includegraphics[scale=0.45]{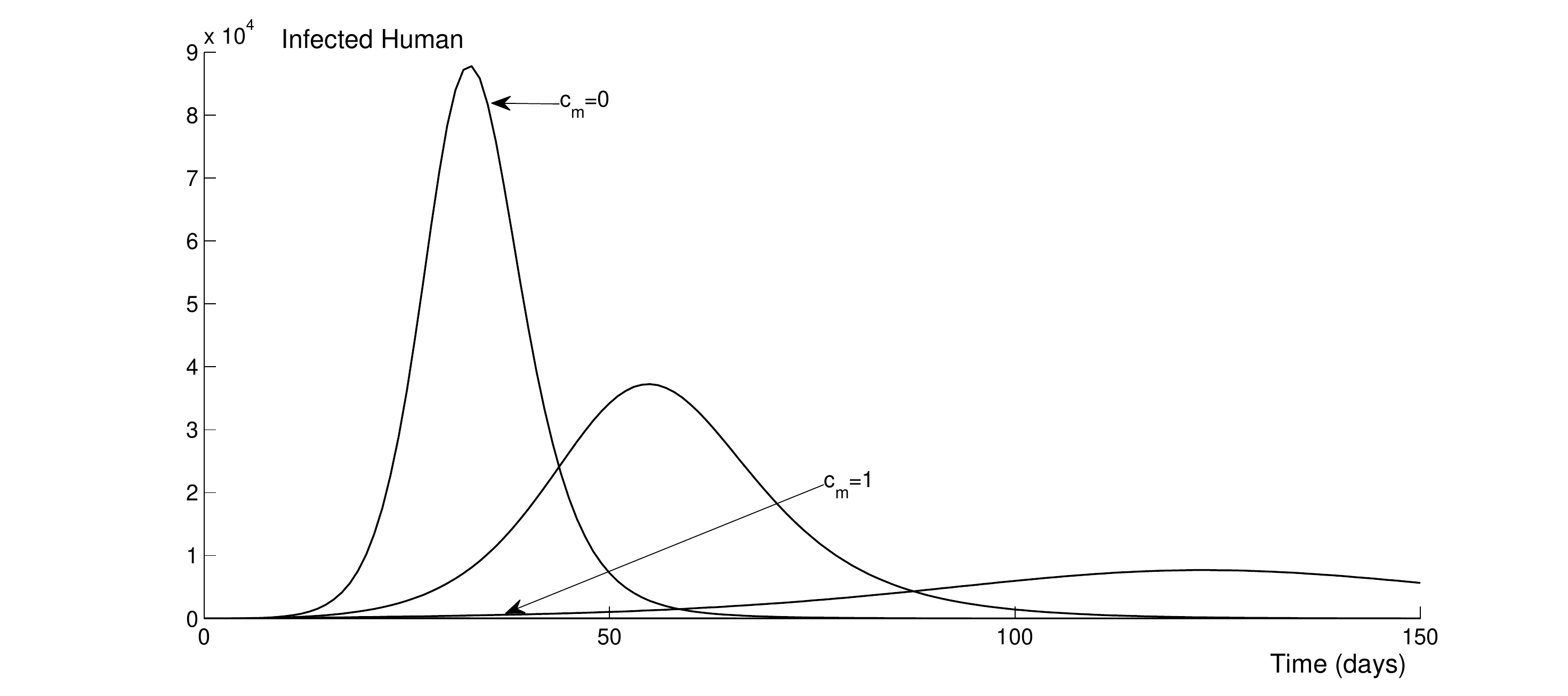}}
\newline
\subfloat[\footnotesize{Variation of infected mosquitoes ($I_m$) with adulticide.}]{
\label{15_mosquito_adulticide_simulation}
\includegraphics[scale=0.45]{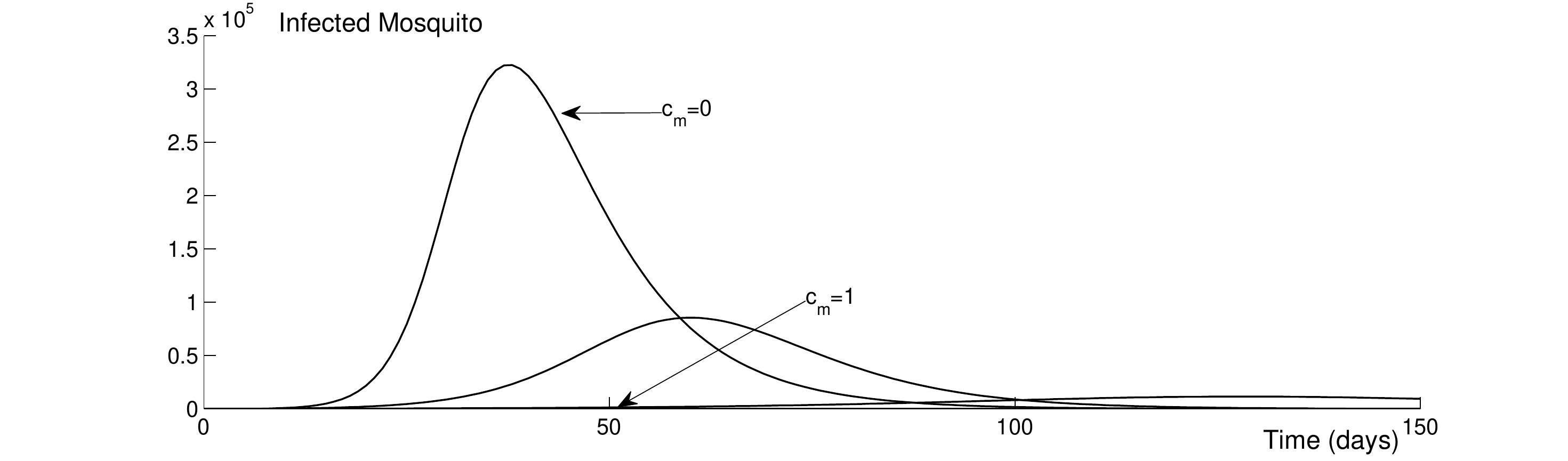}}
\caption{Variation of infected human and infected mosquito populations with
different levels of adulticide: $c_m=0$, $c_m=0.25$,
$c_m=0.50$, $c_m=0.75$, and $c_m=1$.
The other controls are not applied: $c_{A} = 0$, and $\alpha = 1$.}
\end{center}
\end{figure}
\begin{figure}
\begin{center}
\subfloat[\footnotesize{Variation of infected individuals ($I_h$)
with larvicide.}]{\label{12_human_larvicide_simulation}
\includegraphics[scale=0.45]{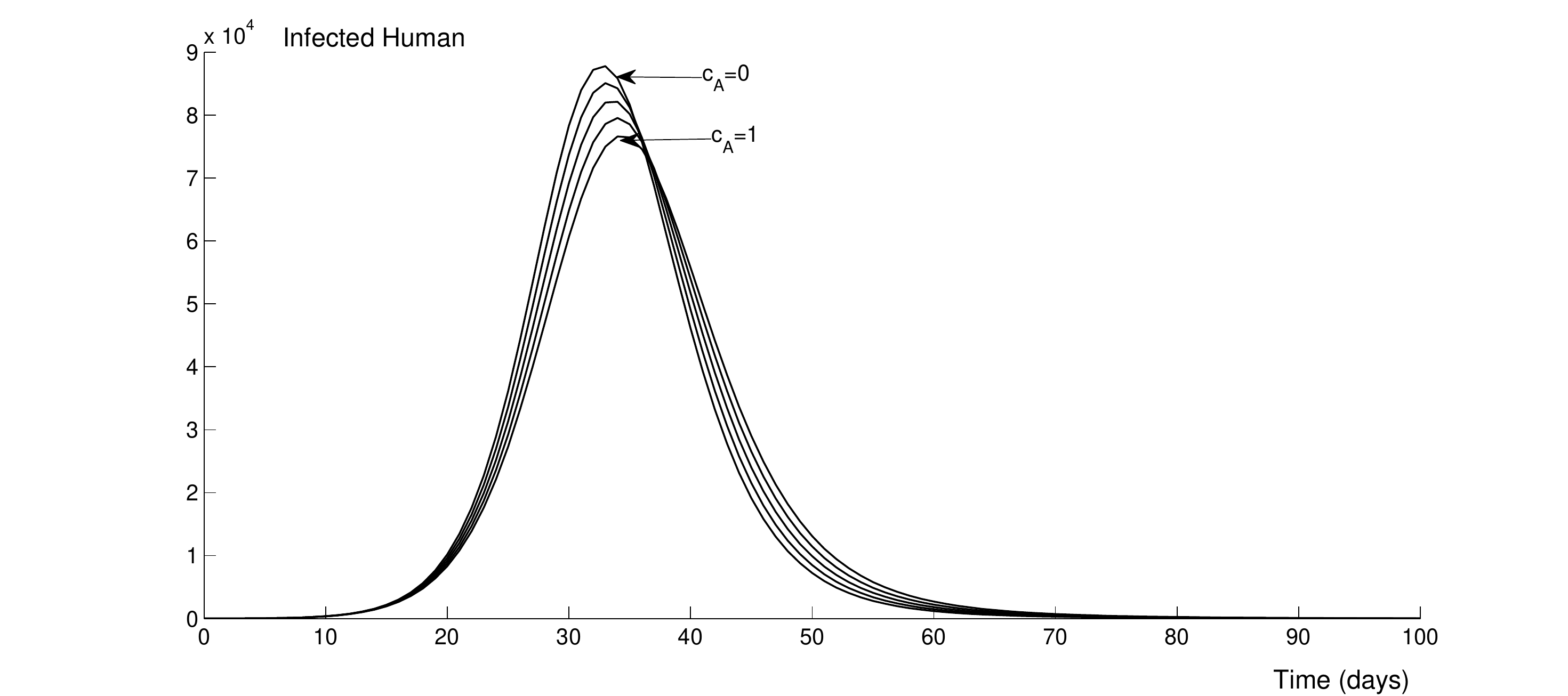}}
\newline
\subfloat[\footnotesize{Variation of infected mosquitoes ($I_m$) with
larvicide.}]{\label{13_mosquito_larvicide_simulation}
\includegraphics[scale=0.45]{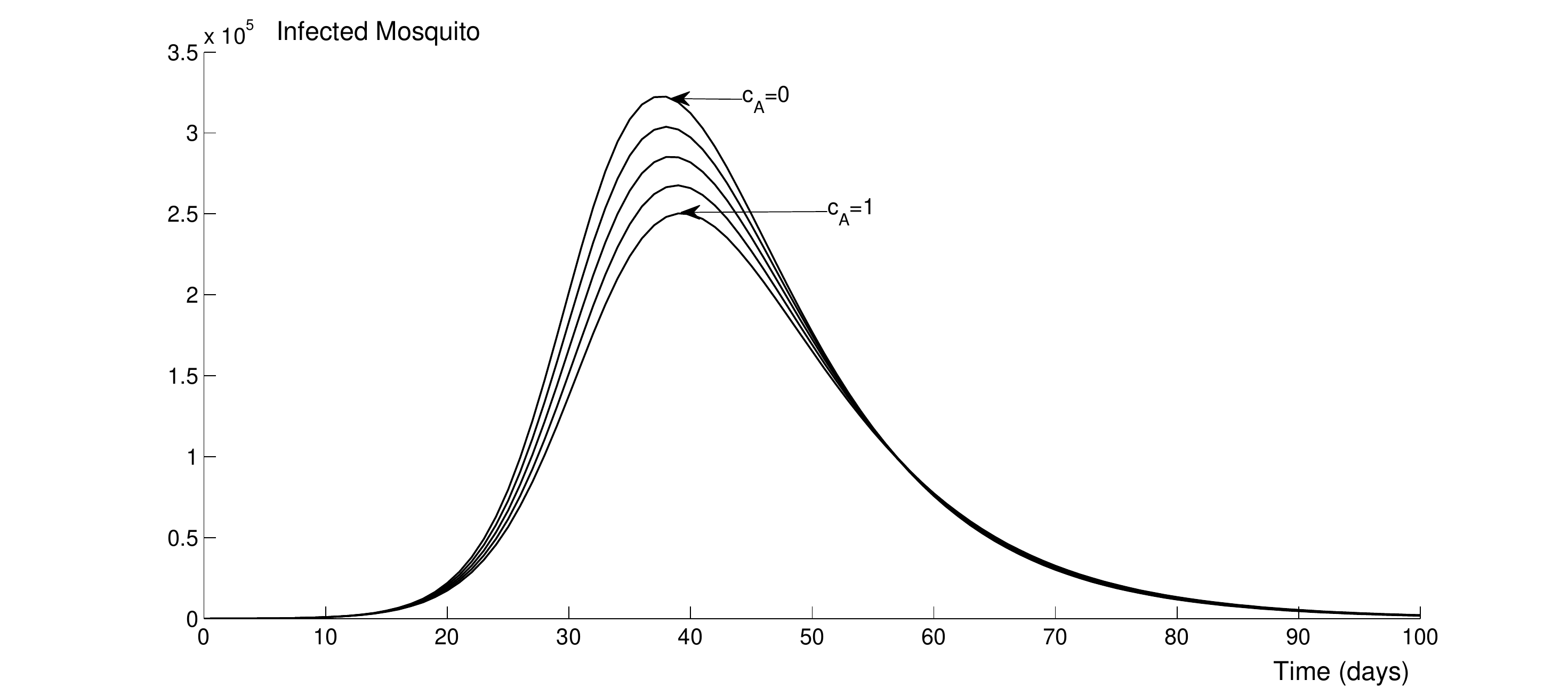}}
\caption{Variation of infected human and infected mosquito populations
with different levels of larvicide: $c_A=0$,
$c_A=0.25$, $c_A=0.50$, $c_A=0.75$, and $c_A=1$.
The other controls are not applied: $c_{m} = 0$, and $\alpha = 1$.}
\end{center}
\end{figure}
\begin{figure}
\begin{center}
\subfloat[\footnotesize{Variation of infected individuals ($I_h$)
with mechanical control.}]{\label{16_human_mechanical_simulation}
\includegraphics[scale=0.45]{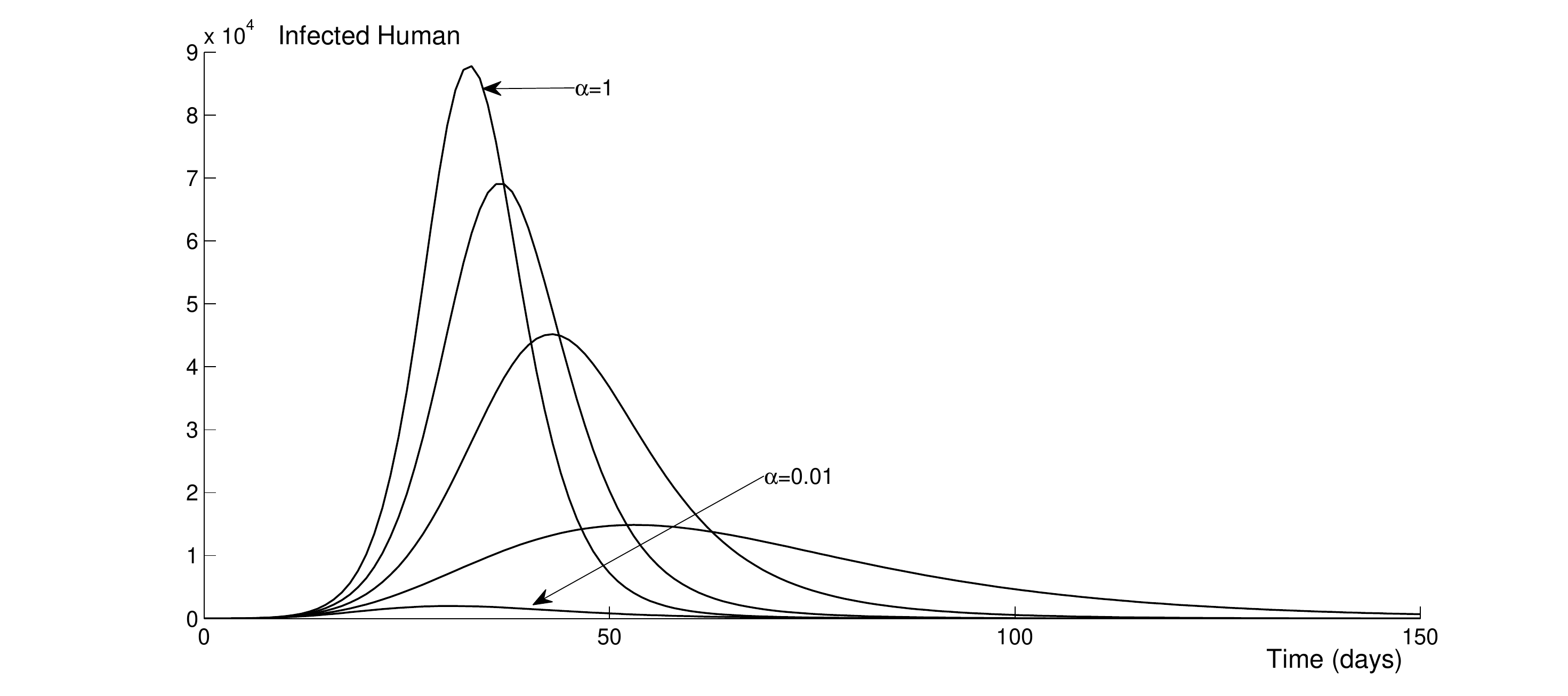}}
\newline
\subfloat[\footnotesize{Variation of infected mosquitoes ($I_m$) with
mechanical control.}]{\label{17_mosquito_mechanical_simulation}
\includegraphics[scale=0.45]{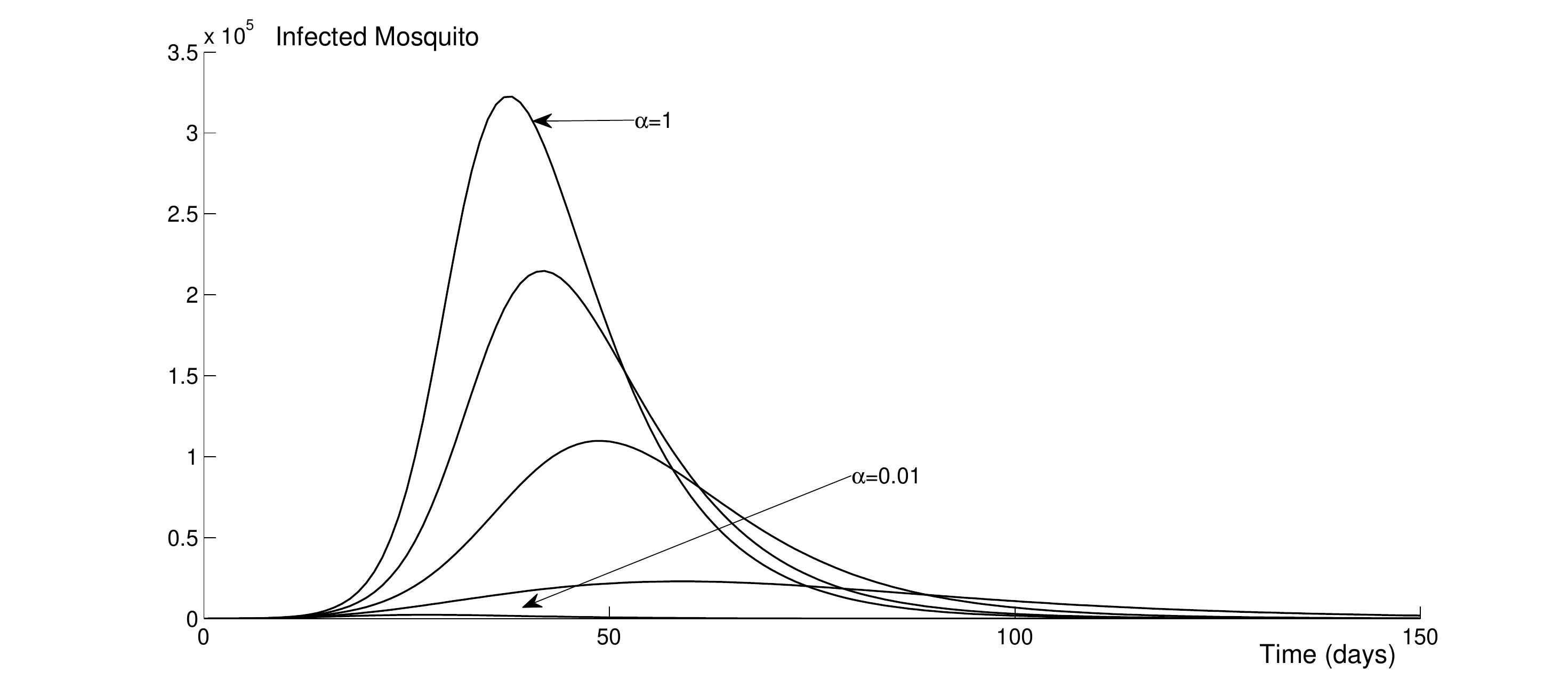}}
\caption{Variation of infected human and infected mosquito populations with
different levels of mechanical control: $\alpha = 0.01$, $\alpha = 0.25$,
$\alpha = 0.5$, $\alpha = 0.75$, and $\alpha = 1$.
The other controls are not applied: $c_{m} = 0$, and $c_A = 0$.}
\end{center}
\end{figure}

Figures~\ref{14_human_adulticide_simulation} and \ref{15_mosquito_adulticide_simulation}
on adulticide control, Figures~\ref{12_human_larvicide_simulation} and \ref{13_mosquito_larvicide_simulation}
on larvicide control, and Figures~\ref{16_human_mechanical_simulation} and
\ref{17_mosquito_mechanical_simulation} on mechanical control
show that a small quantity of each control is efficient to drop infection.
Figures~\ref{14_human_adulticide_simulation}
and \ref{15_mosquito_adulticide_simulation} show that
the human population is already well protected
by covering only 25\% of the country with insecticide
for adult mosquitoes. However, we consider that
\emph{Aedes aegypti} does not become resistant
to insecticide and that there is no shortage of insecticide.
Figures~\ref{12_human_larvicide_simulation},
\ref{13_mosquito_larvicide_simulation},
\ref{16_human_mechanical_simulation},
and \ref{17_mosquito_mechanical_simulation}
show the controls applied in the aquatic phase of the
mosquito. The controls were studied separately,
but each one is closely related to the other. None of these
controls is sufficient to drop the total number of infected
humans to zero, but the removal of breeding sites
and the use of larvicide contributes to prophylaxis.

Figures~\ref{22_human_mixed_control} and \ref{23_mosquitomixed}
show simulations using the three controls simultaneously.
They show that $10\%$ of each control, applied continuously,
is enough to contain the infection near zero case.
\begin{figure}
\begin{center}
\subfloat[\footnotesize{Variation of infected individuals ($I_h$)
with a combined use of the three controls.}]{\label{22_human_mixed_control}
\includegraphics[scale=0.45]{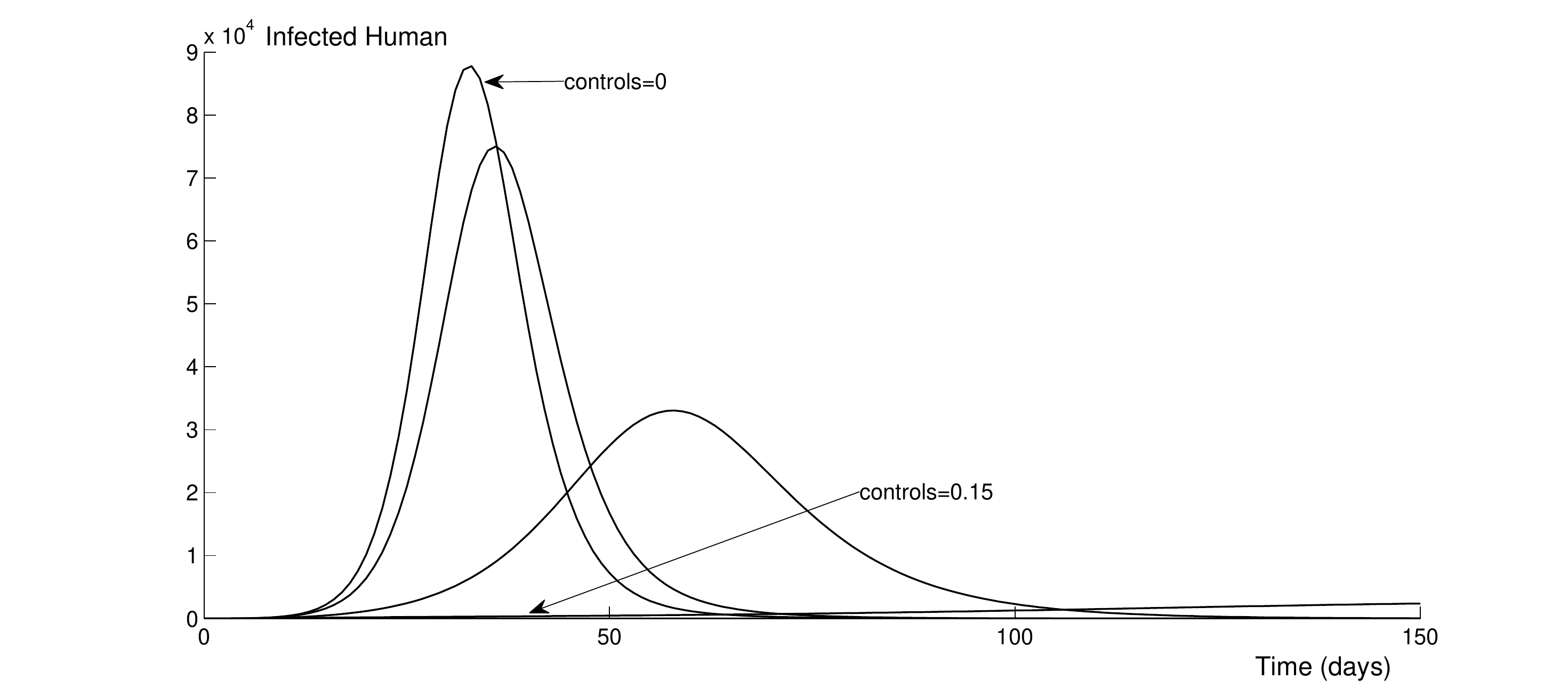}}
\newline
\subfloat[\footnotesize{Variation of infected mosquitoes ($I_m$)
with a combined use of the three controls.}]{\label{23_mosquitomixed}
\includegraphics[scale=0.45]{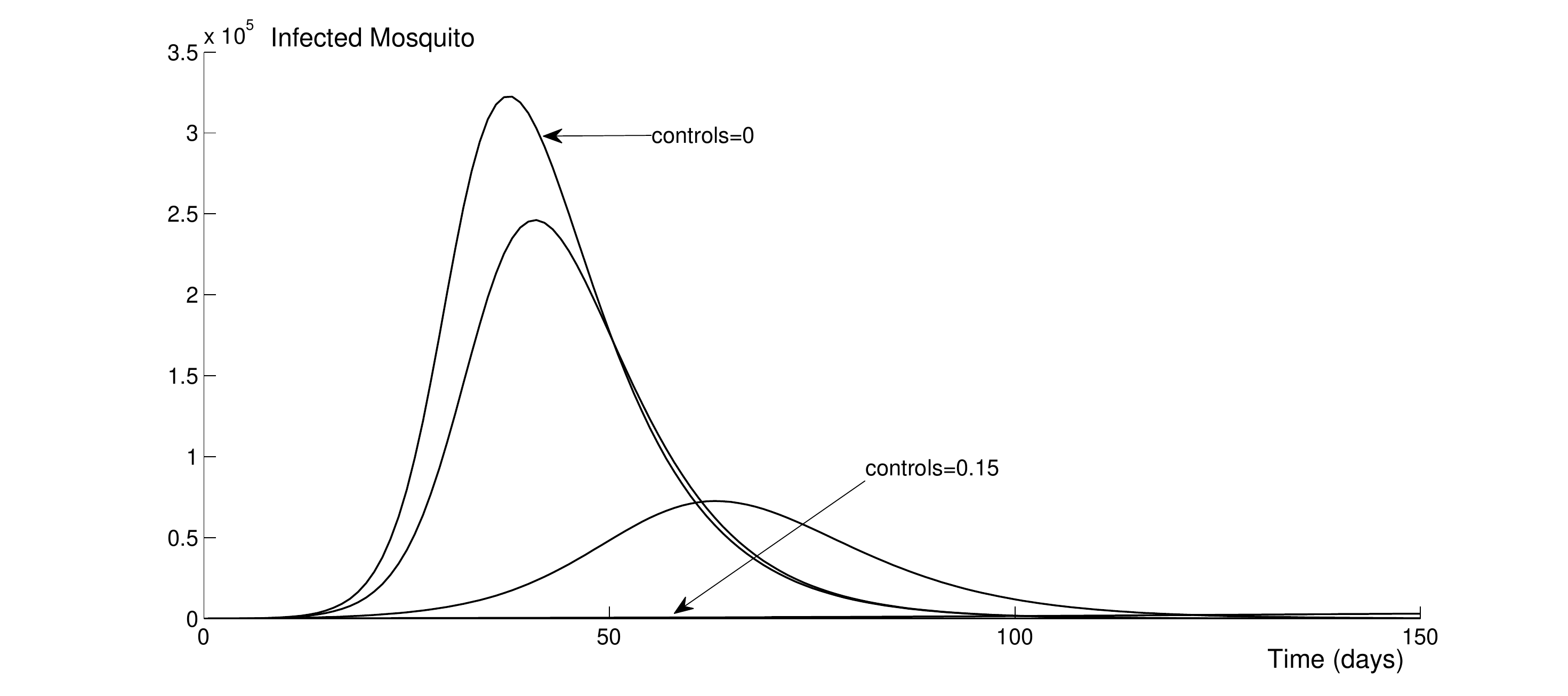}}
\caption{Variation of infected human and infected mosquito populations
when using larvicide, adulticide, and mechanical control simultaneously:
$c_A=c_m=1-\alpha= 0$, 0.01, 0.05, 0.1, 0.15.}
\end{center}
\end{figure}

The eradication of \emph{Aedes aegypti} may not be feasible
and, from the environment point of view, not desirable.
The aim is to reduce the mosquito density and increase
the immunity on the humans. Population herd immunity
can be reached by increasing the total number of resistant persons
to the disease, which implies that these persons have been infected,
or through vaccination.

No commercially available clinical cure or vaccine
is currently available for dengue, but efforts are to
develop one \citep{Blaney2007,WHO2007}. Effective vaccines
have been produced against other flavivirus diseases such as yellow fever,
Japanese encephalitis, and tick-borne encephalitis, so there is good hope
for a vaccine for dengue. We now simulate the epidemiological model with vaccination.


\section{Model with vaccination}
\label{sec:5}

While direct individual protection is the major focus of mass vaccination program,
population effects also contribute to individual protection
through herd immunity, providing protection for unprotected
individuals \citep{Farrington2003}. The more vaccinated people,
the less likely a susceptible person will come into contact
with the infection. With the introduction of a vaccine,
the SIR model related to the human population is augmented
into the SVIR model of Figure~\ref{modeloSVIR},
where $V_h$ represents the total number of vaccinated people.
\begin{figure}
\begin{center}
\includegraphics[scale=0.5]{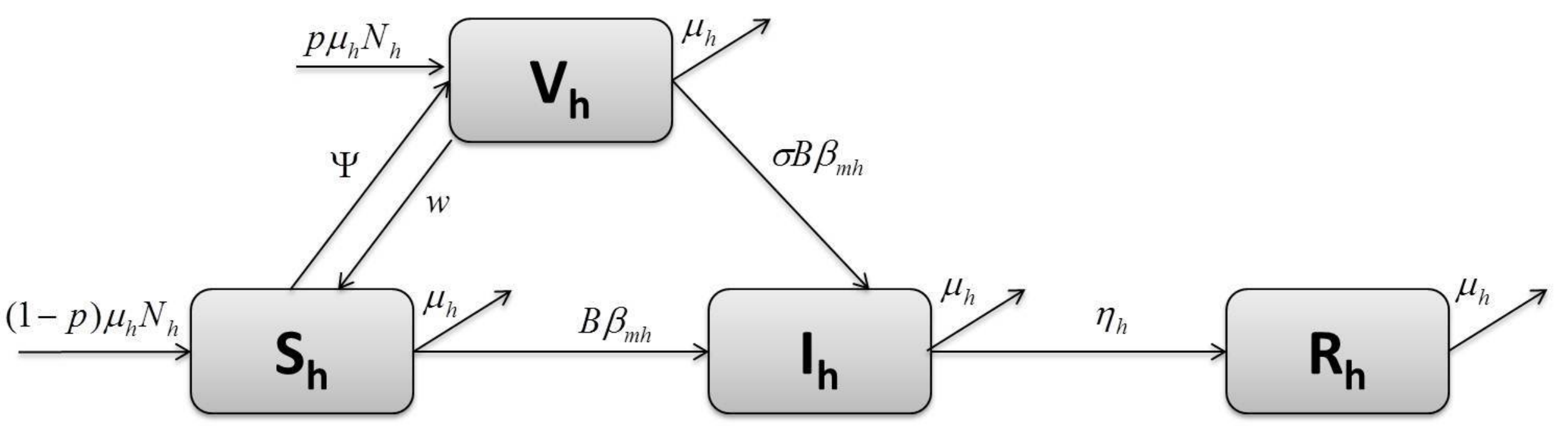}
\end{center}
\caption{\label{modeloSVIR} Epidemiological SVIR (Susceptible, Vaccinated, Infected, Recovered)
model for humans with $S_h$, $V_h$ (human vaccinated population),
$I_h$, and $R_h$ as state variables, $p$ (proportion of the vaccinated new born) as the control,
and $N_h$, $B$, $\beta_{mh}$, $\mu_h$, $\eta_h$,
$w$ (proportional rate at which vaccination loses effect),
$\Psi$ (fraction of the vaccinated susceptible),
and $\sigma$ (infection rate of vaccinated people) as given parameters.}
\end{figure}

Vaccination is continuous with a constant proportion $p$
of vaccinated new born. A fraction $\Psi$ of the susceptible
is vaccinated. The vaccination reduces but does not eliminate
susceptibility to infection. For this reason we consider
the infection rate $\sigma$ of vaccinated people:
when $\sigma=0$ the vaccine is perfect and when $\sigma=1$
the vaccine has no effect at all. The vaccination loses efficacy
at a rate $w$. The differential system for the host population is:
\begin{equation*}
\begin{cases}
S_h'(t) = (1-p)\mu_h N_h +w V_h(t) - \left(B\beta_{mh}\frac{I_m(t)}{N_h}+\Psi +\mu_h\right)S_h(t)\\
V_h'(t) = p\mu_h N_h+\Psi S_h(t)-\left(w+\sigma B \beta_{mh}\frac{I_m(t)}{N_h}+\mu_h\right)V_h(t)\\
I_h'(t) = B\beta_{mh}\frac{I_m(t)}{N_h}(S_h(t)+\sigma V_h(t)) -(\eta_h+\mu_h) I_h(t)\\
R_h'(t) = \eta_h I_h(t) - \mu_h R_h(t).
\end{cases}
\end{equation*}

Figure~\ref{18_human_vaccine_efficacy} shows
simulations for decreasing vaccine efficacy.
Larvicide, insecticide, and mechanical control
were kept null, and the parameters $\sigma$ and $w$ were changed.
For 80\% of the population vaccinated,
the efficacy of the vaccine reduces the disease spread.

\begin{figure}
\begin{center}
\includegraphics[scale=0.45]{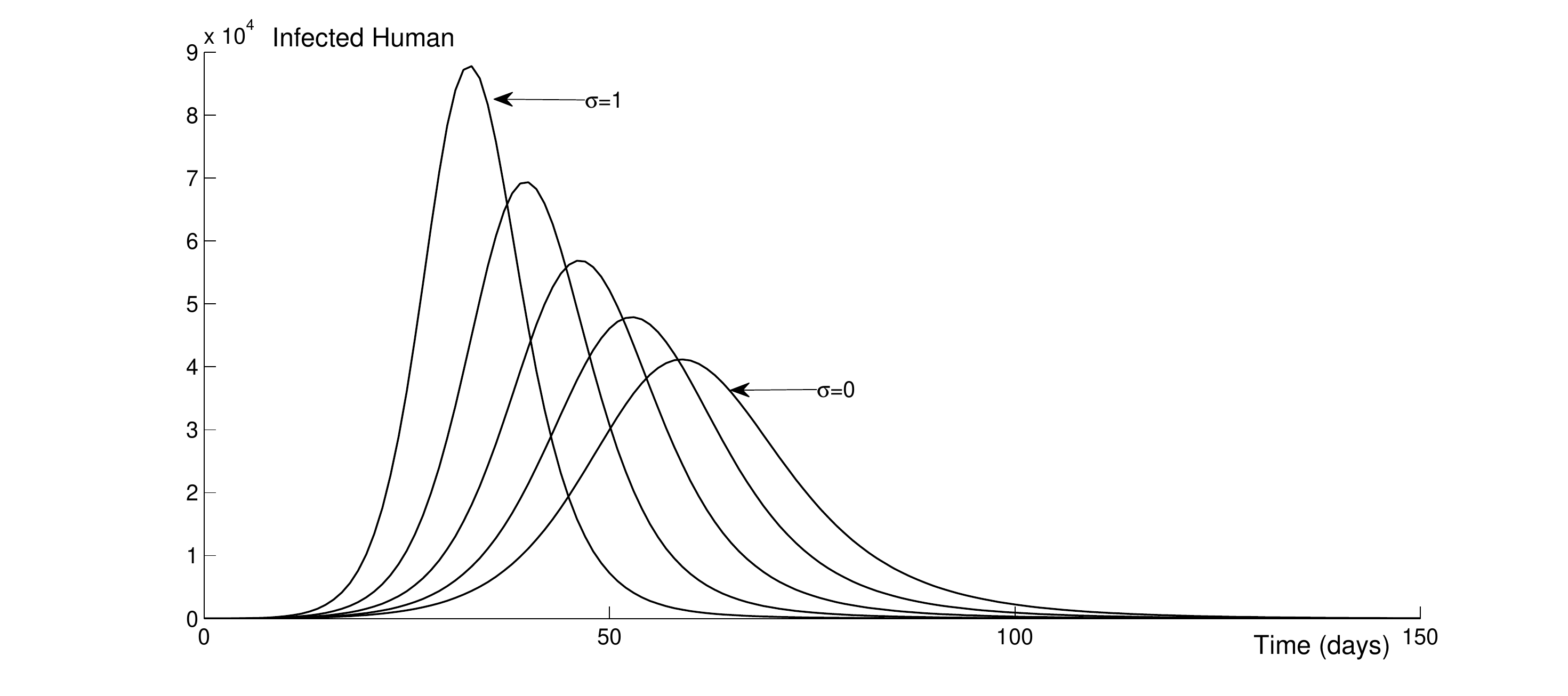}
\end{center}
\caption{\label{18_human_vaccine_efficacy} Total number of infected humans
for decreasing vaccine efficacy ($\sigma=1-w = 0$, 0.25, 0.5, 0.75, 1),
$p=0.80$, and $\Psi=0.80$.}
\end{figure}

\begin{figure}
\begin{center}
\includegraphics[scale=0.45]{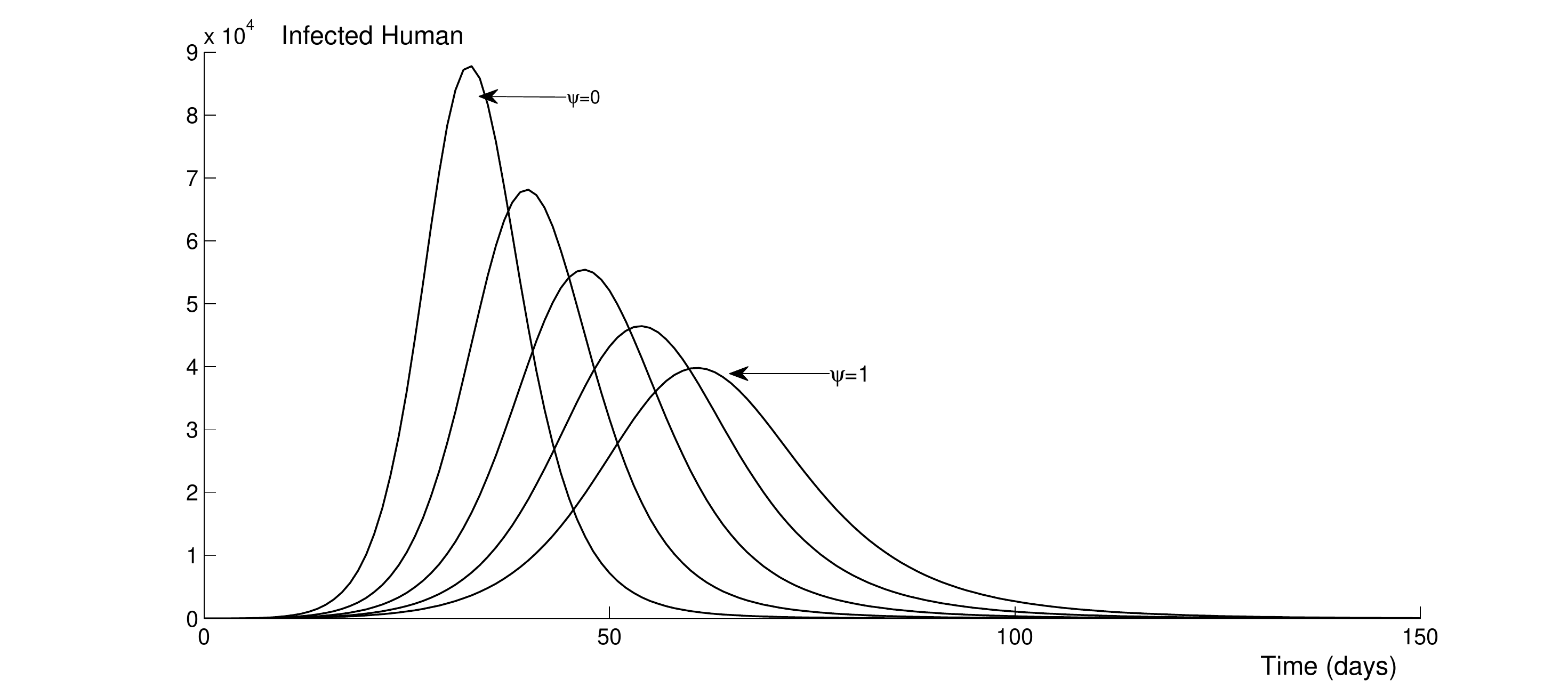}
\end{center}
\caption{\label{24_human_vaccine_susceptiblepopulation}
Total number of infected humans when using different values
of population vaccinated ($\Psi= 0$, 0.25, 0.5, 0.75, 1),
$p=0$, $w=0.85$, and $\sigma=0.15$.}
\end{figure}

Figure~\ref{24_human_vaccine_susceptiblepopulation} presents
the proportion of population vaccinated. It shows that
dengue fever prophylaxis articulates health and
sustainable development.


\section{Conclusion}
\label{sec:6}

Our simulations, based on our compartmental epidemiological model
and formalizing clean-up campaigns to remove the vector breeding sites
and the application of insecticides (larvicide and adulticide),
have shown that even with a low continuous control over time,
the results are encouraging. The model with an imperfect vaccine
has shown that the total number of infected persons can decrease quickly.


\section*{Acknowledgements}

This work was supported by FEDER funds through
COMPETE --- Operational Program Factors of Competitiveness
(``Programa Operacional Factores de Com\-pe\-ti\-ti\-vi\-da\-de'')
and by Portuguese funds through the
{\it Center for Research and Development
in Mathematics and Applications} (University of Aveiro),
the R\&D Center Algoritmi (University of Minho),
and the Portuguese Foundation for Science and Technology
(``FCT --- Funda\c{c}\~{a}o para a Ci\^{e}ncia e a Tecnologia''),
within project PEst-C/MAT/UI4106/2011
with COMPETE number FCOMP-01-0124-FEDER-022690.
Rodrigues was also supported by the Ph.D. fellowship SFRH/BD/33384/2008.




\begin{thebibliography}{36}

\bibitem[Blaney et~al.(2007)Blaney, Sathe, Hanson, Firestone, Murphy and Whitehead]{Blaney2007}
Blaney, J.E., Sathe, N.S., Hanson, G.T., et~al. (2007).
Vaccine candidates for dengue virus type 1 (DEN1) generated by replacement
of the strutural genes of rDEN4 and rDEN4$\delta$30 with those of DEN1.
\emph{Virology Journal, 4}(23): 1--11.

\bibitem[Chowell et~al.(2007)Chowell, Diaz-Duenas, Miller, Alcazar-Velazco, Hyman, Fenimore and Chavez]{Choewll}
Chowell, G., Diaz-Duenas, P., Miller, J.C., et~al. (2007).
Estimation of the reproduction number of dengue fever from spatial epidemic data.
\emph{Mathematical Biosciences, 208}(2): 571--589.

\bibitem[Christophers(2009)]{Christophers1960}
Christophers, S.R. (2009).
\emph{Aedes aegypti, the Yellow fever Mosquito: Its Life History, Bionomics and Structure}.
Cambridge: Cambridge University Press.

\bibitem[Coelho et~al.(2008)Coelho, Burattini, Teixeira, Coutinho and Massad]{Coelho2008}
Coelho, G., Burattini, M., Teixeira, M.G., et~al. (2008).
Dynamics of the 2006/2007 dengue outbreak in Brazil.
\emph{Mem. Inst. Oswaldo Cruz, 103}(6): 535--539.

\bibitem[Derouich and Boutayeb(2006)]{Derouich2006}
Derouich, M. and Boutayeb, A. (2006).
Dengue fever: Mathematical modelling and computer simulation.
\emph{Appl. Math. Comput., 177}(2): 528--544.

\bibitem[Derouich et~al.(2003)Derouich, Boutayeb and Twizell]{Derouich2003}
Derouich, M., Boutayeb, A. and Twizell, E. (2003).
A model of dengue fever.
\emph{Biomedical Engineering Online, 2}(4): 1--10.

\bibitem[Devine et~al.(2009)Devine, Perea, Killeen, Stancil, Clark and Morrison]{Devine2009}
Devine, G.J., Perea, E.Z., Killeen, G.F., et~al. (2009).
Using adult mosquitoes to transfer insecticides to aedes aegypti larval habitats.
\emph{Proceedings of the National Academy of Sciences of the United States of America, 106}(28): 11530--11534.

\bibitem[Dumont and Chiroleu(2010)]{Dumont2010}
Dumont, Y. and Chiroleu, F. (2010).
Vector control for the chikungunya disease.
\emph{Mathematical Biosciences and Engineering, 7}(2): 313--345.

\bibitem[Dumont et~al.(2008)Dumont, Chiroleu and Domerg]{Dumont2008}
Dumont, Y., Chiroleu, F. and Domerg, C. (2008).
On a temporal model for the chikungunya disease: modeling, theory and numerics.
\emph{Mathematical Biosciences, 213}(1): 80--91.

\bibitem[Endy et~al.(2002)Endy, Chunsuttiwat, Nisalak]{Thailand}
Endy, T.P., Chunsuttiwat, S., Nisalak, A., et al. (2002).
Epidemiology of inapparent and symptomatic acute dengue virus infection:
A prospective study of primary school children in Kamphaeng Phet, Thailand.
\emph{Am. J. Epidemiol., 156}(1): 40--51.

\bibitem[Esteva and Yang(2005)]{Esteva2005}
Esteva, L. and Yang, H.M. (2005).
Mathematical model to assess the control
of aedes aegypti mosquitoes by the sterile insect technique.
\emph{Mathematical Biosciences, 198}(2): 132--147.

\bibitem[Farrington(2003)]{Farrington2003}
Farrington, C. (2003).
On vaccine efficacy and reproduction numbers.
\emph{Mathematical Biosciences, 185}(1): 89--109.

\bibitem[Feng and Velasco-Hern\'{a}ndez(1997)]{Feng}
Feng, Z. and Velasco-Hern\'{a}ndez, J.X. (1997).
Competitie exclusion in a vector-host model for the dengue fever.
\emph{Journal of Mathematical Biology, 35}(5): 523--544.

\bibitem[Ferreira et~al.(2006)Ferreira, Pulino, Yang and Takahashi]{Ferreira2006}
Ferreira, C.P., Pulino, P., Yang, H.M. and Takahashi, L.T. (2006).
Controlling dispersal dynamics of Aedes aegypti.
\emph{Mathematical Population Studies, 13}(4): 215--236.

\bibitem[Healstead(1992)]{Healstead1992}
Healstead, S.B. (1992)
The XXth century dengue pandemic: need for surveillance and research.
\emph{World Health Statistics Quarterly, 45}(2-3): 292--298.

\bibitem[Hethcote(2008)]{Hethcote2008}
Hethcote, H.W. (2008).
The basic epidemiology models: models, expressions for $R_0$,
parameter estimation, and applications,
in \emph{Mathematical Understanding of Infectous Disease Dynamics},
S. Ma, Y. Xia (eds). Lecture Notes Series Institute for Mathematical Sciences,
National University of Singapore, Vol.~16, World Scientific Publishing: 1--61.

\bibitem[Hombach et~al.(2007)Hombach, Cardosa, Sabchareon, Vaughn and Barrett]{WHO2007}
Hombach, J., Cardosa, M.J., Sabchareon, A., et al. (2007).
Scientific consultation on immunological correlates of protection induced
by dengue vaccines. \emph{Vaccine, 25}(21): 4130--4139.

\bibitem[Li and Muldowney(1996)]{Li1996}
Li, M.Y. and Muldowney, J.S. (1996).
A geometric approach to global-stability problems.
\emph{SIAM J. on Mathematical Analysis, 27}(4): 1070--1083.

\bibitem[Natal(2002)]{Natal2002}
Natal, D. (2002).
Bioecologia do aedes aegypti.
\emph{Biol\'{o}gico, S\~{a}o Paulo, 64}(2): 205--207.

\bibitem[Rodrigues et~al.(2009)Rodrigues, Monteiro and Torres]{Sofia2009}
Rodrigues, H.S., Monteiro, M.T.T. and Torres, D.F.M. (2009).
Optimization of dengue epidemics: A test case with different discretization schemes.
\emph{AIP Conference Proceedings, 1168}(1): 1385--1388.
{\tt arXiv:1001.3303}

\bibitem[Rodrigues et~al.(2010a)Rodrigues, Monteiro and Torres]{MyID:168}
Rodrigues, H.S., Monteiro, M.T.T. and Torres, D.F.M. (2010a).
Dynamics of dengue epidemics when using optimal control.
\emph{Mathematical and Computer Modelling, 52}(9-10): 1667--1673.
{\tt arXiv:1006.4392}

\bibitem[Rodrigues et~al.(2010b)Rodrigues, Monteiro and Torres]{Sofia2010}
Rodrigues, H.S., Monteiro, M.T.T. and Torres, D.F.M. (2010b).
Insecticide control in a dengue epidemics model.
\emph{AIP Conference Proceedings, 1281}(1): 979--982.
{\tt arXiv:1007.5159}

\bibitem[Rodrigues et~al.(2012)Rodrigues, Monteiro, Torres and Zinober]{Sofia2011}
Rodrigues, H.S., Monteiro, M.T.T., Torres, D.F.M. and Zinober, A. (2012).
Dengue disease, basic reproduction number and control.
\emph{International Journal of Computer Mathematics, 89}(3), 334--346.
{\tt arXiv:1103.1923}

\bibitem[Scott and Morrison(2004)]{Scott2004}
Scott, T.W. and Morrison, A. (2004).
Aedes aegypti density and the risk of dengue-virus transmission,
in \emph{Ecological aspects for application of genetically modified mosquitoes},
W. Takken, T.W. Scott (eds). Springer Science, 187--206.

\bibitem[Semenza and Menne(2009)]{Semenza2009}
Semenza, J.C. and Menne, B. (2009).
Climate change and infectious diseases in Europe.
\emph{The Lancet Infectious Diseases, 9}(6): 365--375.

\bibitem[Tewa et~al.(2009)Tewa, Dimi and Bowang]{Tewa}
Tewa, J., Dimi, J.L. and Bowang, S. (2009).
Lyapunov functions for a dengue disease transmission model.
\emph{Chaos, Solitons \& Fractals, 39}(2): 936--941.

\bibitem[Thom\'{e} et~al.(2010)Thom\'{e}, Yang and Esteva]{Thome2009}
Thom\'{e}, R.C., Yang, H.M. and Esteva, L. (2010).
Optimal control of aedes aegypti mosquitoes
by the sterile techinique and insecticide.
\emph{Mathematical Biosciences, 223}(1): 12--23.

\bibitem[van~den Driessche and Watmough(2002)]{Driessche2002}
van~den Driessche, P. and Watmough, J. (2002).
Reproduction numbers and sub-threshold endemic equilibria
for compartmental models of disease transmission.
\emph{Mathematical Biosciences, 180}(1-2), 29--48.

\bibitem[\url{www.who.int/topics/dengue/en}(2011)]{WHO}
\url{www.who.int/topics/dengue/en}.
World Health Organization, accessed Sept 2011.

\bibitem[\url{www.ine.cv}(2011)]{INEcv}
\url{www.ine.cv}.
INE of Cape Verde, accessed Sept 2011.

\bibitem[\url{www.dengue.gov.cv}(2011)]{CapeVerde}
\url{www.dengue.gov.cv}.
N\'{u}cleo Operacional da Sociedade de Informa\c{c}\~{a}o,
Gabinete do Primeiro Ministro, accessed Sept 2011.

\bibitem[\url{www.minsaude.gov.cv}(2011)]{CapeVerdeSaude}
\url{www.minsaude.gov.cv}.
Minist\'{e}rio da Sa\'{u}de, accessed Sept 2011.

\end{thebibliography}
\end{document}